\newcommand{\ds}{\displaystyle}
\newcommand{\normal}{\trianglelefteq}
\newcommand{\F}{\mathbb{F}}
\newcommand{\Z}{\mathbb{Z}}
\newcommand{\Q}{\mathbb{Q}}
\newcommand{\Gal}{\textnormal{Gal}}
\newcommand{\Hom}{\textnormal{Hom}}
\newcommand{\Cl}{\textnormal{Cl}}
\newcommand{\im}{\textnormal{im }}
\newcommand{\inv}{\rm inv}
\newcommand{\Aut}{\textnormal{Aut}}
\newcommand{\Surj}{\textnormal{Surj}}
\newcommand{\disc}{\textnormal{disc}}
\newcommand{\pK}{\mathfrak{p}}
\newcommand{\ind}{\textnormal{ind}}
\newtheorem*{lemma*}{Lemma}
\newtheorem{theorem}{Theorem}[section]
\newtheorem{lemma}[theorem]{Lemma}
\newtheorem{definition}[theorem]{Definition}
\newtheorem{corollary}[theorem]{Corollary}
\newtheorem{conjecture}[theorem]{Conjecture}
\title{The Weak Form of Malle's Conjecture and Solvable Groups}
\author{Brandon Alberts}
\begin{document}
\maketitle

\abstract{For a fixed finite solvable group $G$ and number field $K$, we prove an upper bound for the number of $G$-extensions $L/K$ with restricted local behavior (at infinitely many places) and $\inv(L/K)<X$ for a general invariant $``\inv"$. When the invariant is given by the discriminant for a transitive embedding of a nilpotent group $G\subset S_n$, this realizes the upper bound given in the weak form of Malle's conjecture. For other solvable groups, the upper bound depends on the size of torsion of the class group of number fields with fixed degree. In particular, the bounds we prove realize the upper bound given in the weak form of Malle's conjecture for the transitive embedding of a solvable group $G\subset S_n$ if we assume that for each finite abelian group $A$ the average size of class group torsion $|\Hom(\Cl(L),A)|$ is smaller than $X^{\epsilon}$ as $L/K$ varies over certain families of extensions with $\inv(L/K)<X$.}

\section{Introduction}

Let $K$ be a number field. One of the biggest questions in arithmetic statistics is counting number fields, and Malle specifically studied counting degree $n$ extensions $L/K$ inside of a fixed algebraic closure $\overline{K}$ when ordered by relative discriminant $\disc(L/K)$ in \cite{malle2002} and \cite{malle2004}. The Galois group of $L/K$ (or rather, of the Galois closure of $L/K$) is $\Gal(L/K)\subset S_n$ a transitive subgroup acting on the $n$ embeddings $L\hookrightarrow \overline{K}$. Let $G\subset S_n$ be a transitive subgroup and
\[
N(K,G;X) = \#\{L/K \mid \Gal(L/K)=G, {\rm Nm}_{K/\Q}(\disc(L/K))<X\}\,.
\]
Malle conjectured that there exists a positive constant $c(K,G)$ such that
\[
N(K,G;X) \sim c(K,G)X^{1/a(G)}(\log X)^{b(K,G)-1}
\]
asymptotically as $X\to \infty$, where $a(G)$ and $b(K,G)$ are explicit positive constants. Here we take $f(X) \sim g(X)$ to mean $\lim_{X\to \infty} f(X)/g(X) = 1$. This is often referred to as the strong form of Malle's conjecture. The values $a(G)$ and $b(K,G)$ have been verified in the following cases:
\begin{itemize}
\item{$G$ an abelian group by Wright \cite{wright1989},}

\item{$G=S_n$ for $n=3$ with $K=\Q$ by Davenport-Heilbronn \cite{davenport-heilbronn1971}, $n=3$ with $K$ arbitrary by Datskovsky-Wright \cite{datskovsky-wright1988}, $n=4,5$ with $K=\Q$ by \cite{bhargava2005}, and $n=4,5$ with $K$ arbitrary by Bhargava-Shankar-Wang \cite{bhargava-shankar-wang2015},}

\item{$S_3\subset S_6$ by Bhargava-Wood \cite{bhargava-wood2007},}

\item{$D_4\subset S_4$ by Cohen-Diaz y Diaz-Olivier \cite{cohen-diaz-y-diaz-olivier2002},}

\item{$Q_{4m}\subset S_{4m}$ the generalized quaternion group of order $4m$ by Kl\"uners \cite{klunersHab2005},}

\item{$S_n\times A\subset S_{n\#A}$ for $A\subset S_{|A|}$ an abelian group in its regular representation with $(|A|,2)=1$ for $n=3$ and $(|A|,n!)=1$ for $n=4,5$ by Wang \cite{jwang2017},}

\item{$C_2\wr H$ for certain groups $H$ by Kl\"uners \cite{kluners2012}.}
\end{itemize}
The author is also aware of upcoming results for $D_4\subset S_8$ \cite{shankar-varma2019} and a large family of imprimitive groups (including many wreath products of the form $T\wr B$ for $T$ abelian or $S_3$) \cite{lemke-oliver-jwang-wood2019}. Similar results are known when the extensions $L/K$ are ordered by other invariants, for example Wood \cite{wood2009} proves the analogous results for abelian groups ordered by conductor. The constant $c(K,G)$, while known to be positive in all of the above cases, is known explicitly in far fewer cases, such as for abelian extensions over $\Q$ by M\"aki \cite{maki1985} and cyclic quartic extensions over general $K$ by Cohen-Diaz y Diaz-Olivier \cite{cohen-diaz-y-diaz-olivier2005}.

Unfortunately, the conjecture is not true in this form, as Kl\"uners \cite{kluners2005} provided a counter-example for $G=C_3\wr C_2\subset S_6$ for which $b(K,G)$ is too small. There have been proposed corrections for $b(K,G)$ by T\"urkelli \cite{turkelli2015}, but the $1/a(G)$ exponent is still widely believed to be correct. This leads to the weak form of Malle's conjecture, which will be the major focus of study of this paper:
\begin{conjecture}[weak form of Malle's conjecture]
For any $\epsilon >0$,
\[
X^{1/a(G)}\ll N(K,G;X) \ll X^{1/a(G)+\epsilon}\,,
\]
where we define $\ind:S_n\rightarrow \Z$ by $\ind(\sigma) = n - \#\{\text{orbits of }\sigma\}$ and
\[
a(G) = \min_{g\in G-\{1\}} \ind(\sigma)\,.
\]
\end{conjecture}
Here we write $f(X)\gg g(X)$ if $\infty \ge \limsup_{X\rightarrow\infty}f(X)/g(X)>0$. This has been proven in more cases, notably:
\begin{itemize}
\item{$G$ any nilpotent group in its regular representation $G\subset S_{|G|}$ by Kl\"uners-Malle \cite{kluners-malle2004},}

\item{Kl\"uners-Malle also proved the predicted upper bound for $\ell$-groups in any representation \cite{kluners-malle2004},}

\item{For $D_{p}$ the dihedral group of order $2p$ for a prime $p$ (in both the degree $p$ and $2p$ representations) Kl\"uners proved the lower bound unconditionally and the upper bound conditional on the Cohen-Lenstra heuristics for the average size of torsion in class groups of quadratic fields \cite{kluners2006}.}

\item{In a separate project announced around the same time as this paper, Mehta proves Malle's predicted upper bounds for Frobenius groups $G\cong F\rtimes H\subset S_n$ with abelian kernel with $n=|F|$ or $|G|$, conditional on Malle's predicted upper bound for $H\subset S_{|H|}$ and the $\ell$-torsion conjecture \cite{mehta2019},}

\item{$A_4, C_3^2 \rtimes C_4 \subset S_6$ announced around the same time as this paper by Mehta \cite{mehta2019}.}
\end{itemize}
There are no known counterexamples to the weak form of Malle's conjecture.

For other groups, not even the upper bound of $X^{1/a(G)+\epsilon}$ is known. The subject of studying just this upper bound is a vibrant area itself, and centers on a folklore conjecture attributed to Linnik: Let $N_{K,n}(X)$ be the number of extensions $L/K$ of degree $n$ with ${\rm Nm}_{K/\Q}(\disc(L/K))<X$. Then Linnik's conjecture predicts that
\[
N_{K,n}(X) \sim C_{K,n} X
\]
as $X\to \infty$. Linnik's conjecture would follow from the strong form of Malle's conjecture, while the weak form of Malle's conjecture would imply $\log N_{K,n}(X) \sim \log X$. Progress towards this conjecture has been slow; the best general bounds are due to Schmidt \cite{schmidt1995}, which state
\[
N_{K,n}(X) \ll X^{\frac{n+2}{4}}\,.
\]
This was improved upon by Ellenberg-Venkatesh \cite{ellenberg-venkatesh2006} for large $n$, who proved that there exist constants $A_n$ depending on $n$ and an absolute constant $C$ such that
\[
N_{K,n}(X) \ll \left(X |\disc(K/\Q)|^n A_n^{[K:\Q]}\right)^{\exp(C\sqrt{\log n})}\,,
\]
which shows in particular that
\[
\limsup_{X\rightarrow \infty} \frac{\log N_{K,n}(X)}{\log X} \ll_n n^{\epsilon}\,.
\]
Keeping in step with the philosophy of Malle's conjecture, Dummit \cite{dummit2018} proved an upper bound which improves upon Schmidt's bounds in many cases when $G\subset S_n$ is a \emph{proper} transitive subgroup. If any subgroup $G'\le G$ properly containing a point stabilizer has index at most $t$ in $G$, then Dummit shows
\[
N(K,G;X) \ll X^{\frac{1}{2(n-t)}\left(\sum_{i=1}^{n-1}\deg(f_{i+1}) - \frac{1}{[K:\Q]}\right)+\epsilon}
\]
for $f_1,...,f_n$ a set of primary invariants of $G$ for which $\deg f_i\le i$. This result significantly improves upon the bounds given by Schmidt in many cases, but is still not very close to the bound predicted by Malle. In particular, this exponent is larger than $\frac{1}{2} - \frac{1}{2n[K:\Q]}$ which is very close to $1/2$ for large $K$, although many groups have $1/a(G)<1/2$ (namely all groups in the regular representation other than $G=C_2$).

When one asks the analogous question over function fields $\F_q[T]$, a preprint of Ellenberg-Tran-Westerland \cite{ellenberg-tran-westerland2017} shows that when $q\gg |G|$ Malle's predicted upper bound is satisfied. This gives strong evidence that Malle's predicted upper bounds should hold over number fields, although the methods used for function fields do not appear to transfer to number fields.

In this paper, we prove upper bounds for number fields inductively by relating the number of $G$-extensions to the number of $N$-extensions and $G/N$ extensions for a normal subgroup $N\normal G$. The main result is as follows:

\begin{theorem}\label{thm:mainintro}
Let $K$ be a number field, $G\subset S_n$ a transitive subgroup, and $N\normal G$ an abelian normal subgroup. Then, for any $\epsilon>0$, $N(K,G;X)$ is
\[
\ll \max\left\{X^{1/a(N)+\epsilon} , \sum_{\substack{\Gal(L/K)\cong G/N\\ {\rm Nm}_{K/\Q}(\overline{\disc}(L/K))\le X}}X^{\epsilon}\Big|\Hom\left(\Cl\left(L^{C_G(N)/N}\right),N\right)\Big|\right\}\,,
\]
where the sum is over $G/N$-extensions $L/K$ ordered by a certain invariant $\overline{\disc}(L/K) \le X$, $C_G(N)$ denotes the centralizer of $N$ in $G$, and
\[
a(N) = \min_{g\in N-\{1\}} \ind(g)\,.
\]
\end{theorem}

The explicit definition of $\overline{\disc}$ can be found in Definition \ref{def:inv}. This result allows one to pass from bounds on the average size of class group torsion over certain subfields of $G/N$-extensions to bounds on $N(K,G;X)$. The goal is to use this result inductively in order to lift bounds on the number of $G/N$-extensions to bounds on the number of $G$-extensions.

The idea behind the proof is similar to that of other inductive results in number field counting, which are used to prove the strong form for $C_2\wr H$ \cite{kluners2012}, $A\times S_n$ \cite{jwang2017}, and for a large family of nonabelian groups in an upcoming preprint \cite{lemke-oliver-jwang-wood2019}, as well as upper bounds for $D_p$ \cite{kluners2006} and Frobenius groups with abelian kernel \cite{mehta2019}. In all previous work on the topic, this method has been employed successfully specifically when the normal subgroup $N\normal G\subset S_n$ fixes a partition of the set $\{1,2,...,n\}$ under the transitive action. This is exactly the situation in which the discriminant has a product formula
\[
D_{F/K} = {\rm Nm}_{L/K}(D_{F/L})\cdot D_{L/K}^{[F:L]}\,.
\]
(This is also called a Brauer relation in \cite{mehta2019}.) This relation is heavily used to pass from counting results on $G/N$-extension to counting results on $G$-extensions by relating the two discriminants.

The main improvement demonstrated by Theorem \ref{thm:mainintro} compared to previous inductive approaches is that we are allowed to take $N$ to be any abelian normal subgroup, without needing to assume it preserves a partition of the $\{1,...,n\}$. We do this by directly relating $G$-extensions ordered by discriminant to $G/N$-extensions ordered by $\overline{\disc}$, an invariant which need not agree with a discriminant, allowing us to bypass the need for a discriminant product formula entirely. This removes many of the restrictions of past work on the subject, and by proving a generalization of Theorem \ref{thm:mainintro} under any sufficiently general ordering we are still able to use this result inductively to produce upper bounds (see Theorem \ref{thm:main}). The freedom of being able to take any abelian normal subgroup allows us to prove very general results, in particular for solvable groups by inducting along a normal series with abelian factors. We group these according to results which are conditional on other conjectures and results which are unconditional.

\subsection{Conditional Results}

Theorem \ref{thm:mainintro} gives a concrete relation between bounds for $N(K,G;X)$ and the size of torsion in class groups. It is widely believed that torsion in class groups is ``small'' on average. We can phrase this a little more explicitly as follows:

\begin{conjecture}[Average Torsion Conjecture]
Let $K$ be a number field, $G$ a finite group, $A$ a finite abelian group, $\mathcal{F}$ a ``nice'' family of $G$-extensions $L/K$, and $\inv$ an admissible ordering ( such as the discriminant). Then the average size of $|\Hom(\Cl(L),A)|$ over extensions $L/K\in \mathcal{F}$ with ${\rm Nm}_{K/\Q}(\inv(L/K)) \le X$ is $\ll X^{\epsilon}$.
\end{conjecture}

The author is not aware of a place in the literature where this conjecture appears in such great generality, but generally speaking the behavior is not expected to change when $\inv$ is some admissible ordering other than a discriminant (we again refer to Definition \ref{def:inv} for the definition of an admissible ordering). In many cases, it is expected that the average value is actually finite, see \cite{cohen-lenstra1984,bartel-lenstra2017} for example. Davenport-Heilbronn's \cite{davenport-heilbronn1971} work on cubic extensions implies that the average value of $|\Hom(\Cl(L),C_3)|$ as $L/\Q$ varies over quadratic extensions is finite, while Gauss's genus theory implies the average value of $|\Hom(\Cl(L),C_2)|$ as $L/\Q$ varies over quadratic extensions grows like $\log X$.

This average value of class group torsion appears in Theorem \ref{thm:mainintro}. By inducting along a composition series, we are able to prove the following:

\begin{corollary}\label{cor:solvableintro}
Let $K$ be a number field and $G\subset S_n$ a solvable transitive subgroup. If the Average Torsion Conjecture is true, then
\[
N(K,G;X) \ll X^{1/a(G)+\epsilon}\,.
\]
\end{corollary}

This verifies Malle's predicted upper bounds for all solvable groups, conditional only on bounds for the average growth of torsion in class groups, and motivates the title of the paper. We need to assume such bounds under various admissible orderings, not just the discriminant, in order to prove such a general result. If we assume a stronger conjecture, we can restrict ourselves to only considering the discriminant ordering:

\begin{conjecture}[$\ell$-Torsion Conjecture]
Let $K$ be a number field, $n$ a positive integer, and $\ell$ a prime number. Then $|\Cl(L)[\ell]|\ll |\disc(L/\Q)|^{\epsilon}$ as $L/K$ varies over all degree $n$ extensions.
\end{conjecture}

The $\ell$-torsion conjecture is a much stronger assumption, and as a consequence we are much farther from proving such a result. Although the $\ell$-torsion conjecture is widely believed, the only known cases of this conjecture come from genus theory, where we can show $|\Cl(L)[2]| = 2^{\omega(\disc(L/\Q))} \ll |\disc(L/\Q)|^{\epsilon}$ as $L/\Q$ varies over quadratic extensions. See \cite{brumer-silverman1996,duke1998,ellenberg-pierce-wood2017,zhang2005} for discussions on the $\ell$-torsion conjecture and known results.

The benefit of the $\ell$-torsion conjecture is that knowing this conjecture for a single ordering, such as the discriminant, implies that the conjecture is true over any admissible ordering. Thus the $\ell$-torsion conjecture implies the average torsion conjecture. This allows us to state a result which, although weaker than Corollary \ref{cor:solvableintro}, does not require any non-discriminant orderings to state:

\begin{corollary}\label{cor:solvableell}
Let $K$ be a number field and $G\subset S_n$ a solvable transitive subgroup. If the $\ell$-torsion conjecture is true, then
\[
N(K,G;X) \ll X^{1/a(G)+\epsilon}\,.
\]
\end{corollary}

\subsection{Unconditional Results}

We know that torsion in the class group can not be of an arbitrarily large size, as Minkowski's bounds imply a trivial bound for the size of the class group among $L/K$ of bounded degree, namely $|\Cl(L)|\ll |\disc(L/\Q)^{1/2}|$. This implies $|\Hom(\Cl(L),A)|\ll |\disc(L/\Q)|^{d/2}$ where $d$ is the number of generators of the finite abelian group $A$. Combining these bounds with Theorem \ref{thm:mainintro} can inductively produce upper bounds for $G$-extensions whenever $N$ is an abelian normal subgroup and we know bounds on the number of $G/N$-extensions. In certain cases, these inductive bounds will be better than the best known bounds for the group $G$ by other methods (such as \cite{schmidt1995,dummit2018}). We generally expect this to happen when inertia groups $I_{\pK}(L/K)\le G$ which are ``closer to the center'' of $G$ produce smaller exponents for $\pK$ in the discriminant of $L/K$. We include some data on what these bounds look like for groups of small degree in the appendix.

As an example, when $G=D_p\subset S_p$ is the dihedral group for $p$ an odd prime, then $a(G) = \frac{2}{p-1}$. Theorem \ref{thm:mainintro} reproduces the unconditional bounds proven by Kl\"uners \cite{kluners2006}:
\[
N(K,D_p;X)\ll X^{\frac{3}{p-1}+\epsilon}\,.
\]
These are significantly smaller that Schmidt's bound of $\frac{p+2}{4}$, and Dummit's bound which is at least $\frac{1}{2} - \frac{1}{2p[K:\Q]} \sim \frac{1}{2}$. In many ways, Theorem \ref{thm:mainintro} can be viewed as a vast generalization of Kl\"uners's results.

A special case of this result occurs when $N\normal G$ is a central subgroup, so that $\Cl(L^{C_G(N)/N})=\Cl(K)$ is independent of the average being taken. We then use an inductive argument in this setting combined with Minkowski's trivial bounds on the order of the class group in order to prove the following result:

\begin{corollary}\label{cor:hypercenter}
Let $K$ be a number field and $G\subset S_n$ be a transitive subgroup. Suppose there exists a normal subgroup $N\normal G$ such that
\begin{enumerate}
\item[(a)]{
$N/(N\cap Z^{\infty}(G))$ is abelian with $d$ generators, where $Z^{\infty}(G)$ denotes the hypercenter of $G$ i.e. the last term of the lower central series,
}

\item[(b)]{
There exists a positive real number $M$ such that for each $G/N$-extension $L/K$ ${\rm Nm}_{K/\Q}(\disc(L/K))\ll {\rm Nm}_{K/\Q}(\overline{\disc}(L/K))^M$ and
\[
X^{\frac{1}{2} d M} \cdot \#\{ L/K \mid \Gal(L/K)\cong G/N, \overline{\disc}(L/K)<X\} \ll X^{1/a(G)+\epsilon}\,.
\]
}
\end{enumerate}
Then
\[
N(K,G;X) \ll X^{1/a(G) + \epsilon}\,.
\]
\end{corollary}

This result unconditionally verifies Malle's predicted upper bounds for a large family of nonabelian groups. We remark that condition (b) is reminiscent of the conditions on the order of $A$ in Wang's proof of the strong form for $A\times S_n$ \cite{jwang2017}, and is of the same flavor as results in an upcoming preprint by Lemke Oliver-Wang-Wood \cite{lemke-oliver-jwang-wood2019} proving the strong form of Malle's conjecture for many more imprimitive groups. In fact, a condition of this nature appears in Malle's original work \cite[Proposition 5.2]{malle2002} used to provide evidence for the consistency of Malle's conjecture under wreath products. The main improvement in Corollary \ref{cor:hypercenter} compared to previous results comes from showing that the entire class group $\Cl(L)$ does not always contribute to upper bounds, only the subgroup $\Cl(L^{C_G(N)/N})$ does. When $N$ commutes with more elements, this reduces the effect the class group has on upper bounds.

If $G$ is itself a nilpotent group, we can take $N=G$ so that $N/(N\cap Z^{\infty}(G)) = 1$. In this case conditions (a) and (b) are trivially satisfied with $d=0$ and we prove the following corollary:

\begin{corollary}\label{cor:nilpotent}
Let $K$ be a number field and $G\subset S_n$ a nilpotent transitive subgroup. Then
\[
N(K,G;X) \ll X^{1/a(G)+\epsilon}\,.
\]
\end{corollary}

This is an improvement on the results of Kl\"uners-Malle \cite{kluners-malle2004}, who produce the predicted upper bound when $G$ is an $\ell$-group or $G$ is nilpotent in the regular representation.

\subsection{Admissible Orderings and Restricted Local Conditions}

The techniques of this paper work in great generality. As stated after Theorem \ref{thm:mainintro}, we will prove all of our results under any admissible ordering. In addition to considering the general problem being a key step in allowing us to perform inductive arguments, there has been recent interest in studying more general orderings such as the conductor or the product of ramified primes. In certain cases, these orderings have nicer properties than the discriminant and can be easier to work with. Wood \cite{wood2009} counts abelian extensions ordered by conductor, and shows some ways in which this invariant is nicer than the discriminant. Bartel-Lenstra \cite{bartel-lenstra2017}, Dummit \cite{dummit2018}, and Johnson \cite{johnson2017} continue this philosophy by studying different questions when ordering number fields by various invariants. We have made an effort to cater to this perspective, where the admissible orderings we consider are general enough to include other orderings previously considered in the literature.

We also prove analogous results for number fields with restricted local behavior at any number of places. Such bounds are considered in Bhargava-Shankar-Wang \cite{bhargava-shankar-wang2015}, Kl\"uners-Malle \cite{kluners-malle2004}, Wright \cite{wright1989}, and Wood \cite{wood2009} in cases with certain restricted local behaviors at finitely many places, and do not behave significantly differently (with the exception of those cases that fall under the Grunwald-Wang Theorem). Restricted local behavior at infinitely many places has also been considered, although the order of magnitude of $N(K,G;X)$ is not always expected to be the same under infinitely many restrictions on local behavior. Davenport-Heilbronn \cite{davenport-heilbronn1971} considers $S_3$ extensions with squarefree discriminant, and Bhargava \cite{bhargava2014} does the same for $S_4$ and $S_5$. Wright's proof of the strong form of Malle's conjecture for abelian extensions carries through for restricting ramification at infinitely many places as well \cite{wright1989}. The proof techniques we use require little to no modification in order to consider families of number fields with restricted local behaviors.

\subsection{Layout of the Paper}

In Section \ref{sec:definitions} we will give the explicit definitions and details needed to state the generalization of Theorem \ref{thm:mainintro} to an arbitrary admissible ordering under restricted local conditions (found in Theorem \ref{thm:main}). This will include the definition of an admissible ordering, and specifically the ordering $\overline{\disc}(L/K)$ appearing in Theorem \ref{thm:mainintro}.

We prove some technical lemmas in Section \ref{sec:technical}, then prove Theorem \ref{thm:main} in Section \ref{sec:proofmain}. The result is proven by breaking apart $\Hom(G_K,G)$ into the fibers of the push-forward $q_N:\Hom(G_K,G) \rightarrow \Hom(G_K,G/N)$, and proving upper bounds on the sizes of these fibers. We follow up with the proofs of the various conditional and unconditional corollaries in Section \ref{sec:cor}.

We conclude this paper with a discussion of possible improvements of this result in Section \ref{sec:remarks} relating to further study of nonsolvable groups or improving the unconditional bounds for solvable groups.

At the end of the paper we include an Appendix containing data on the unconditional bounds for solvable transitive subgroups of small degree.

\section{Main Definitions and Statements of Main Result}\label{sec:definitions}

Let $K$ be a number field, $G_K=\Gal(\overline{K}/K)$ the absolute Galois group, $P_K$ the set of places of $K$, and $I_K$ the set of ideals of the ring of integers in $K$. For each $\pK\in P$ let $D_\pK = \Gal(\overline{K}_\pK/K_\pK)$ be the absolute decomposition group and $I_\pK\normal D_\pK$ the absolute inertia group. These are subgroups of $G_K$ up to conjugacy, but throughout the paper we fix an embedding $D_\pK\hookrightarrow G_K$ for each $\pK\in P_K$.

We will prove the results of this paper for number fields in any sufficiently nice ordering. We take some cues from orders we want to consider, such as the discriminant or conductor, and define admissible orderings to only depend on the ramification data:
\begin{definition}\label{def:inv}
We define an \textbf{admissible ordering} (or \textbf{admissible invariant}) $\inv:\prod_{\pK}\Hom(D_\pK,G) \rightarrow I_K$ as follows:
\begin{enumerate}
\item[(a)]{there is a family of functions $\inv_\pK:\Hom(I_\pK,G)\rightarrow \Z_{\ge 0}$ for each place $\pK\in P_K$ such that
\[
\inv(\psi) = \prod_{\pK\in P} \pK^{\inv_\pK(\psi|_{I_\pK})}\,,
\]
}

\item[(b)]{
For all but finitely many places $\pK\in P_K$, $\gamma(I_\pK)=1$ if and only if $\inv_\pK(\gamma)=0$.
}
\end{enumerate}
We define $\inv:\Hom(G_K,G)\rightarrow I_K$ by $\inv(\pi) = \inv((\pi|_{D_\pK})_{\pK\in P})$.
\end{definition}

Then for any finite group $G$ and an admissible ordering $\inv$ we define the counting function
\[
N_{\inv}(K,G;X) = \#\{\pi:G_K\twoheadrightarrow G \mid {\rm Nm}_{K/\Q}(\inv(\pi))<X\}\,.
\]
If we choose a transitive subgroup $G\le S_n$, then the ordering given by
\[
\inv(\pi) = \disc(\text{degree } n \text{ \'etale algebra corresponding to }\pi)
\]
is admissible, and $N_{\inv}(K,G;X)=N(K,G;X)$ is the usual counting function studied by Malle, as described in the introduction. An alternate, but equivalent, expression for this invariant when $\pi$ is surjective is
\[
\inv(\pi) = \disc(L_\pi^{{\rm Stab}_G(1)})\,,
\]
where $L_\pi/K$ is the $G$-extension fixed by $\ker(\pi)$, and ${\rm Stab}_G(1)$ is the stabilizer of $1$ in $G\le S_n$.

We will give upper bounds on the asymptotic growth of this counting function by inductively comparing $N_{\inv}(K,G;X)$ to functions on $N$- and $G/N$-extensions for a normal subgroup $N\normal G$. We will need the following lemma for producing an admissible ordering $\overline\inv$ on $G/N$-extensions from an admissible ordering $\inv$ on $G$-extensions:

\begin{lemma}\label{lem:barinv}
Fix a number field $K$ and a finite group $G$. Let $\inv$ be an admissible ordering on $\Hom(G_K,G)$. Fix a normal subgroup $N\normal G$ with quotient map $q_N:G\rightarrow G/N$ and push-forward map $(q_N)_*:\Hom(-,G)\rightarrow \Hom(-,G/N)$. Define the family of functions $\overline\inv_\pK:\Hom(I_\pK,G/N)\rightarrow \Z_{\ge 0}$ defined by
\begin{align*}
\overline\inv_\pK(\gamma) = \min\{ \inv_\pK(\delta) : \delta\in (q_N)_*^{-1}(\gamma)\}\,.
\end{align*}
Then the function $\overline\inv:\prod_{\pK}\Hom(D_\pK,G/N)\rightarrow I_K$ defined by
\[
\overline\inv(\psi) = \prod_{\pK\in P_K} \pK^{\overline\inv_\pK(\psi)}
\]
is an admissible ordering.
\end{lemma}

\begin{proof}
Definition \ref{def:inv}(a) is trivially satisfied by $\overline\inv$.

For Definition \ref{def:inv}(b) suppose $\gamma(I_\pK)=1$. This implies that $\gamma$ is completely determined by the image of Frobenius $\gamma({\rm Fr}_\pK)$. Therefore there exists a lift $\delta:D_\pK\rightarrow G$ defined by $\delta(I_\pK)=1$ and $\delta({\rm Fr}_p)\in G$ some lift of $\gamma({\rm Fr}_p)\in G/N$. The admissibility of $\inv$ and $\delta(I_\pK)=1$ implies that $\inv_\pK(\delta)=0$ (except for finitely many $\pK$ providing exceptions), so by definition $0\le \overline\inv_\pK(\gamma)\le \inv_\pK(\delta)=0$.

Conversely, if $\overline\inv(\gamma)=0$ then there exists a lift $\delta:D_\pK\rightarrow G$ such that $\inv(\delta)=0$ by definition. Admissibility of $\inv$ then implies $\delta(I_\pK)=1$ (except for finitely many $\pK$ providing exceptions), which then implies $\gamma(I_\pK) = \delta(I_\pK)N/N = 1$.
\end{proof}

This is how we define $\overline{\disc}$ in Theorem \ref{thm:mainintro}. An alternate, but equivalent, definition can be given by considering the discriminant on local $G/N$-\'etale algebras:
\[
\overline{\disc}(L_\pK/K_\pK) = {\rm gcd}\left(\disc(F_\pK/K_\pK) \mid F_\pK^N = L_\pK^{\otimes |N|}\right)\,.
\]

We will additionally consider restricted local behavior: if we fix a family $\Sigma = (\Sigma_\pK)$ for $\Sigma_\pK\subset \Hom(D_\pK,G)$, we then define
\[
N_{\inv}(K,\Sigma;X) = \#\{\pi:G_K\twoheadrightarrow G \mid (\pi|_{D_\pK})\in\Sigma, {\rm Nm}_{K/\Q}(\inv(\pi))<X\}\,.
\]
We will give upper bounds for the growth of these functions as $X\rightarrow \infty$. We define the following notations:
\begin{definition}
Let $\Sigma=(\Sigma_\pK)$ be a family of local conditions $\Sigma_\pK\subset \Hom(D_\pK,G)$. We define the following:
\begin{enumerate}
\item[(a)]{
$\Surj^{\Sigma}(G_K,G) = \{\pi:G_K\twoheadrightarrow G: (\pi|_{D_\pK})\in \Sigma\}$,
}

\item[(b)]{
for any normal subgroup $N\normal G$ define $\Sigma(N) = \Sigma \cap \prod_{\pK} \Hom(D_\pK,N)$,
}

\item[(c)]{
for any normal subgroup $N\normal G$ with quotient map $q_N:G\rightarrow G/N$ and push-forward $(q_N)_*:\Hom(-,G)\rightarrow \Hom(-,G/N)$ we define $(q_n)_*\Sigma$ to be the family $((q_N)_*(\Sigma_{\pK}))$ with $(q_N)_*(\Sigma_\pK) \subset \Hom(D_\pK,G/N)$.
}
\end{enumerate}
\end{definition}

The weak form of Malle's conjecture defines the $a(G)$ invariant to be the smallest exponent of $\pK$ that can appear in the discriminant of a tamely ramified extension for all but finitely many places $\pK$, so we can make the analogous definition:
\begin{definition}\label{def:a}
Let $G$ be a finite group, $w$ admissible, and $\Sigma=(\Sigma_\pK)$. Define
\[
a_{\inv}(\Sigma) = \liminf_{\mathcal{N}_{K/\Q}(\pK)\rightarrow \infty}\min_{\substack{\gamma\in\Sigma_\pK\\ \gamma(I_\pK)\ne 1} } \inv_{\pK}(\gamma)\,,
\]
where we take the convention
\[
\min_{n\in S} n = \infty
\]
whenever the set $S$ is empty.

If $\Sigma_\pK=\Hom(D_\pK,G)$ is trivial for all places $\pK\in P_K$, we denote this by $a_{\inv}(G)$.
\end{definition}

If $G\subset S_n$ is a transitive subgroup, $\inv=\disc$ is the corresponding discriminant, and $\Sigma_\pK = \Hom(D_\pK,G)$ for all $\pK\in P_K$ then $a_{\inv}(\Sigma)=a(G)$ agrees with the invariant predicted by Malle. This remains true if we restrict local conditions at \emph{finitely many} places, but if we allow $\Sigma_{\pK} \ne \Hom(D_\pK,G)$ for all places $\pK\in P_K$ it is possible that $a_{\inv}(\Sigma)>a_{\inv}(G)$. This agree with the cases that are already known (for examples with restricted local conditions, see \cite{bhargava2014, davenport-heilbronn1971,wright1989}). We briefly describe a small example of this phenomenon for which $a_{\inv}(\Sigma)\ne a_{\inv}(G)$ is known to be the correct invariant:

\textbf{Example:} Let $G=C_4$, $\disc$ be the usual discriminant of $C_4$-extensions of $\Q$, and define the local conditions
\[
\Sigma_p = \begin{cases}
\{\gamma\in \Hom(D_p,C_4) : \gamma(I_p)=1\} & p=2,\\
\{\gamma\in \Hom(D_p,C_4) : \gamma(I_p)=C_4\} & \text{else.}
\end{cases}
\]
In this situation, $N_{\disc}(\Q,\Sigma;X)$ counts the number of tamely ramified $C_4$-extensions $L/\Q$ for which $|I_p(L/\Q)|\in\{1,4\}$ for all places $p$. The discriminant then satisfies $\nu_p(\disc(L/\Q)) = |I_p(L/\Q)|-1\in \{0,3\}$ as all places are at most tamely ramified, which implies $a_{\disc}(\Sigma)=3$. This is bigger than Malle's predicted invariant $a_{\disc}(C_4) = 1$. In fact, Wright's proof of Malle's conjecture for abelian extensions \cite{wright1989} extends to cases with restricted ramification in this way and shows that
\[
N_{\disc}(\Q,\Sigma;X) \sim c_{\disc}(\Q,\Sigma)X^{1/3}
\]
for some positive constant $c_{\disc}(\Q,\Sigma)$. This example highlights how the number of $G$-extensions with restricted local behaviors can be much smaller than the total number of $G$-extensions, and shows how the adjusted invariant $a_{\inv}(\Sigma)$ can capture this distinction.

\textbf{Remark:} By only considering $\gamma$ with $\gamma(I_\pK)\ne 1$ in the definition of $a_{\inv}(\Sigma)$, we guarantee that $0<a_{\inv}(\Sigma)\le \infty$ for all admissible orderings $\inv$ and local conditions $\Sigma$.

It is not necessarily true that we expect an analog of Malle's conjecture to hold for any choice of admissible ordering $\inv$ and local conditions $\Sigma$. As a counter example, we can choose $\Sigma$ to contain only local conditions which give Grunwald-Wang counter examples, in which case $N_{\inv}(K,\Sigma;X)=0$ (see \cite{wood2009} for a good exposition of this obstruction in the context of abelian extensions). We also defined admissibility for $\inv$ quite broadly compared to the existing literature, to the point where we allow strange orderings that make it hard to extend the theoretic justifications of Malle's conjecture.

We prove results towards the analog of Malle's predicted upper bound only, while keeping in mind that this bound may not be sharp in this generality:
\begin{conjecture}[Generalized Weak Malle's Conjecture]\label{conj:genWM}
Let $K$ be a number field, $G$ a finite group, $\inv$ an admissible ordering, and $\Sigma$ a family of local conditions. Then
\[
N_{\inv}(K,\Sigma;X) \ll X^{1/a_{\inv}(\Sigma) + \epsilon}\,,
\]
where we take the convention $1/a_{\inv}(\Sigma)=0$ if $a_{\inv}(\Sigma)=\infty$.
\end{conjecture}

All of the results stated in the introduction generalize to an arbitrary admissible ordering $\inv$ with any choice of restricted local conditions $\Sigma$. This produces very strong evidence that these bounds should be true in general, in particular the analog of Corollary \ref{cor:solvableintro} will imply that Conjecture \ref{conj:genWM} is true for all solvable groups if the Average Torsion Conjecture is true. We state the main result of the paper here, and the analogs of the other results stated in the introduction will be stated and proved in Section \ref{sec:cor}.

\begin{theorem}\label{thm:main}
Let $K$ be a number field, $G$ a finite group with abelian normal subgroup $N\normal G$, $\Sigma$ a family of local conditions and $\inv$ an admissible ordering. Then, for any $\epsilon>0$, $N_{\inv}(K,\Sigma;X)$ is
\[
\ll \max\left\{X^{1/a_{\inv}(\Sigma(N))+\epsilon}\ , \sum_{\substack{\pi\in \Surj^{(q_N)_*\Sigma}(G_K,G/N)\\ {\rm Nm}_{K/\Q}(\overline{\inv}(\pi))<X}} X^\epsilon\Big|\Hom\left(\Cl\left(L_\pi^{C_{G}(N)/N}\right),N\right)\Big|\right\}\,,
\]
where $L_\pi$ is the fixed field of $\ker\pi\le G_K$.
\end{theorem}

Theorem \ref{thm:main} is the full version of Theorem \ref{thm:mainintro} stated in the introduction.

\section{Technical Lemmas}\label{sec:technical}

We will need some intermediate results before jumping straight into the proof of Theorem \ref{thm:main}. We will use Dirichlet series as a convenient organizational tool for these counting functions, which we connect to counting functions of the form $N_{\inv}(K,\Sigma;X)$ via Integral transforms. It will suffice to use the following result:

\begin{lemma}\label{lem:Dirichlet}
Let $\{b_n\}$ be a sequence indexed by the positive integers for which $b_n \ge 0$ for all $n\ge 1$. For a fixed positive real number $a>0$ the following are equivalent:
\begin{enumerate}
\item[(i)]{
for all $\epsilon>0$
\[
\sum_{n\le X} b_n \ll X^{a+\epsilon}\,,
\]
}

\item[(ii)]{
The Dirichlet series $\ds \sum_{n=1}^{\infty} b_n n^{-s}$ converges for all real numbers $s>a$.
}
\end{enumerate}
\end{lemma}

We remark that it is only necessary to consider real values for $s$ (as all Dirichlet series converge on right half planes) and that we will \emph{not} utilize any analytic continuations. Not requiring the existence of an analytic continuation allows us to consider such a broad range of orderings and local conditions. The proof of some form of Lemma \ref{lem:Dirichlet} can be found in most introductory texts in analytic number theory (see \cite[Part II Theorem 1.13]{tenenbaum2015} for one such example).


We will also need upper bounds for counting abelian extensions which are uniform in the base field.

\begin{lemma}\label{lem:abelian}
Let $K$ be a number field, $A$ a finite abelian group, and $S\subset P_K$ a finite set of places. Then there exists a constant $C=C(A,m)$ depending only on the group $A$ and the degree $m=[K:\Q]$ such that
\[
|\Hom(G_K^S,A)| \le |\Hom(\Cl(K),A)| C(A,m)^{|S|}\,,
\]
where $G_K^S$ is the Galois group of the maximal extension of $K$ unramified outside of $S$.
\end{lemma}

\begin{proof}
Class Field Theory gives an exact sequence
\[
\begin{tikzcd}
\prod_{\pK\in S} I_\pK \rar & G_K^{S, {\rm ab}} \rar & \Cl(K) \rar & 1\,.
\end{tikzcd}
\]
The functor $\Hom(-,A)$ is a left exact functor of groups, which produces the following exact sequence of abelian groups:
\[
\begin{tikzcd}
\Hom(\Cl(K),A) \rar & \Hom(G_K^{S},A) \rar & \prod_{\pK\in S}\Hom(I_\pK, A)\,.
\end{tikzcd}
\]
$I_\pK$ is finitely generated, and in fact local class field theory tells us that $I_\pK \cong \text{cyclic}\times \Z_\ell^{[K:\Q]}$. This implies
\[
|\Hom(I_\pK,A)|\le |A|^{1+[K:\Q]}\,,
\]
so that
\[
|\Hom(G_K^S, G)|\le |\Hom(\Cl(K),G)|\cdot |A|^{(1+[K:\Q])|S|}\,.
\]
\end{proof}

\section{Proof of Theorem \ref{thm:main}}\label{sec:proofmain}

The main idea behind this section is the following: consider an exact sequence of finite groups
\[
\begin{tikzcd}
1 \rar & N \rar{i} & G \rar{q_N} & G/N \rar &1\,.
\end{tikzcd}
\]
If $G=A$ is an abelian group, then $\Hom(H,-) = H^1(H,-)$ is a left exact functor on abelian groups with the trivial $H$ action. In other words, the following is an exact sequence:
\[
\begin{tikzcd}
0 \rar & \Hom(H,N) \rar{i_*} & \Hom(H,A) \rar{(q_N)_*} &\Hom(H,A/N)\,.
\end{tikzcd}
\]
So in particular
\[
|\Hom(H,A)| \le |\Hom(H,N)|\cdot |\Hom(H,A/N)|\,.
\]
This was integral to the proof of Lemma \ref{lem:abelian} in the case $H=G_K^S$.

If $G$ is not abelian, the same statement is not true. The Hom functor only produces an exact sequence of pointed sets, which does not translate into bounds on the cardinality. Instead, we express $|\Hom(H,G)|$ as a sum of fibers of $(q_N)|_*$. We will require two technical results in group theory. First, we introduce some notation for this section:
\begin{itemize}
\item{
$G$ acts on $N$ by conjugation on the left, which we will denote $x.y=x y x^{-1}$ for $x\in G$ and $y\in N$. $N$ is a normal subgroup of $G$, so this is well-defined.
}
\item{
Define the homomorphism $\kappa:G\rightarrow \Aut(N)$ sending $x$ to the conjugation by $x$ map $(y\mapsto x y x^{-1})$ for any $y\in N$.
}
\item{
$\Aut(G)$ acts on $G$ by the natural left action, which we will often denote $\alpha.x=\alpha(x)$ for $x\in G$. This notation is used so that it lines up with conjugation $\kappa(x).y=x.y$.
}
\item{Given a group action $\phi:H\rightarrow \Aut(G)$ define the set of crossed homomorphisms
\[
Z^1_\phi(H,G) = \{f:H\rightarrow G \mid f(xy) = f(x) [\phi(x).f(y)]\}\,.
\]
}
\item{
Given any two maps $f,g:H\rightarrow G$ (not necessarily homomorphisms), define the map $(f*g):H\rightarrow G$ by coordinate-wise multiplication $x\mapsto f(x)g(x)$. Similarly for any set $B\subset \textnormal{Maps}(H,G)$ define $f*B=\{f*b | b\in B\}$ and $B*f=\{b*f|b\in B\}$. The operation $*$ makes $\textnormal{Maps}(H,G)$ into a group, but in general $\Hom(H,G)$ is not a subgroup because it is not closed.
}
\end{itemize}

\begin{lemma}\label{lem:homfiber}
Suppose $g\in \Hom(H,G)$, and $(q_N)_*(g)=\overline{g}$. Then the fiber above $\overline{g}$ is given by
\begin{align*}
q_*^{-1}(\overline{g}) &= Z^1_{\kappa g}(H,N) * g\,.
\end{align*}
\end{lemma}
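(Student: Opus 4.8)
The plan is to show that pointwise right-multiplication by $g$, i.e. the map $f\mapsto f*g$, restricts to a bijection from $Z^1_{\kappa g}(H,N)$ onto the fiber $q_*^{-1}(\overline g)$. Before anything else I would observe that $\kappa g:=\kappa\circ g$ is a genuine homomorphism $H\to\Aut(N)$, being a composite of homomorphisms, so that $Z^1_{\kappa g}(H,N)$ is well-defined; it contains the constant map at $1\in N$, which maps to $g$ itself under $f\mapsto f*g$, so in particular the fiber is nonempty.

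For the inclusion $q_*^{-1}(\overline g)\subseteq Z^1_{\kappa g}(H,N)*g$, I would take $h\in\Hom(H,G)$ with $q_*(h)=\overline g=q_*(g)$ and set $f(x)=h(x)g(x)^{-1}$, so that $h=f*g$ pointwise. Applying $q$ gives $q(f(x))=\overline g(x)\overline g(x)^{-1}=1$, hence $f$ takes values in $N=\ker q$. Expanding the relation $h(xy)=h(x)h(y)$ as $f(xy)g(x)g(y)=f(x)g(x)f(y)g(y)$, cancelling $g(y)$ on the right, and multiplying on the right by $g(x)^{-1}$, one gets $f(xy)=f(x)\bigl(g(x)f(y)g(x)^{-1}\bigr)=f(x)f(y)^{(\kappa g)(x)}$, using the identity $y^{(\kappa g)(x)}=y^{g(x)}$ noted before the lemma. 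Thus $f\in Z^1_{\kappa g}(H,N)$ and $h=f*g$ lies in the claimed set.

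For the reverse inclusion I would simply run this computation backwards: given $f\in Z^1_{\kappa g}(H,N)$, put $h=f*g$; reversing the steps shows $h(xy)=h(x)h(y)$ (the normalization $h(1)=1$ follows from $f(1)=1$, which the crossed-homomorphism identity forces at $x=y=1$), so $h\in\Hom(H,G)$, and $q(h(x))=q(f(x))q(g(x))=\overline g(x)$ since $f(x)\in N$, so $q_*(h)=\overline g$. Injectivity of $f\mapsto f*g$ is immediate because $*$ makes $\textnormal{Maps}(H,G)$ into a group. Combining the inclusions yields the stated equality.

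I do not anticipate a real obstacle: the whole argument is careful bookkeeping. The only genuine subtlety is matching conventions — one must fix the conjugation convention implicit in $\kappa$ (so that $y^a=aya^{-1}$) so that the rearrangement of $h(xy)=h(x)h(y)$ produces exactly the crossed-homomorphism identity $f(xy)=f(x)f(y)^{\phi(x)}$ and not a variant twisted by an inverse. As a byproduct the bijection gives $|Z^1_{\kappa g}(H,N)|=|q_*^{-1}(\overline g)|<\infty$, which is what will be needed when summing over the fibers of $q_*$ in the proof of Lemma \ref{lem:inductivestep}.
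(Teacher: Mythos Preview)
Your proof is correct and follows essentially the same approach as the paper: both show the two inclusions by the same pointwise computation, translating the homomorphism identity for $h=f*g$ into the crossed-homomorphism identity for $f=h*g^{-1}$ and back. The only differences are cosmetic---you do the inclusions in the opposite order and phrase the result as a bijection rather than a set equality---but the underlying calculation is identical.
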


\begin{proof}
Suppose there exists $g\in \Hom(H,G)$ such that $\beta_*(g)=\overline{g}$. For any $f\in Z^1_{\kappa g}(H,N)$, it follows that
\begin{align*}
(f*g)(xy) &= f(xy)g(xy)\\
&= f(x)[(\kappa g)(x).f(y)] g(x) g(y)\\
&=f(x)g(x)f(y) g(x)^{-1}g(x) g(y)\\
&= f(x)g(x)f(y) g(y)\\
&= (f*g)(x) (f*g)(y)\,.
\end{align*}
Clearly $q_*(f*g) = q_*(g)=\overline{g}$ by $\im f \subset N=\ker q$. Therefore $Z^1_{\kappa g}(H,N)*g\subset q_*^{-1}(\overline{g})$.

For the reverse containment, suppose $f\in \Hom(H,G)$ such that $q_*(f) = \overline{g}$. Then $f(x)g(x)^{-1}\in N$ for every $x\in H$ and
\begin{align*}
(f*g^{-1})(xy) &= f(xy)g(xy)^{-1}\\
&=f(x)f(y) g(y)^{-1} g(x)^{-1}\\
&= f(x) g(x)^{-1} g(x)(f(y)g(y)^{-1})g(x)^{-1}\\
&= (f*g^{-1})(x) [g(x).(f*g^{-1})(y)]\\
&= (f*g^{-1})(x) [(\kappa g)(x).(f*g^{-1})(y)]\,.
\end{align*}
Therefore $f*g^{-1}\in Z^1_{\kappa g}(H,N)$, so that $f\in Z^1_{\kappa g}(H,N) * g$. The opposite containment $q_*(\overline{g})\subset Z^1_{\kappa g}(H,N)*g$ then follows.
\end{proof}

This implies that we get an expression
\[
\Hom(H,G) = \bigcup_{\overline{g}\in {\rm im}((q_N)_*)} Z^1_{\kappa g}(H,N)
\]
for any choice of representative $g$ for each $\overline{g}$. Each fiber may be of a different size, and studying their asymptotic sizes when $H=G_K$ is a question of independent interest in number field counting (see \cite{alberts2019} for a more in depth discussion of these fibers in the context of number field counting). For our purposes, we only need an upper bound.

\begin{lemma}\label{lem:ztohom}
Let $H$ act on $G$ by the homomorphism $\phi:H\rightarrow \Aut(G)$. Then the restriction map $f\mapsto f|_{\ker\phi}$ defines a map
\begin{align*}
\alpha&: Z^1_\phi(H,G) \rightarrow \Hom(\ker\phi, G)\,,
\end{align*}
which has fibers of size at most $|H/\ker \phi|^{|G|}$.
\end{lemma}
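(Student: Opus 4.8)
The plan is to use that a crossed homomorphism is completely rigid once its values on a generating set are fixed, combined with the classical fact that a finite group can be generated by only logarithmically many elements.

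First I would record the easy structural points. Put $M=\ker\phi$. For $m,m'\in M$ the defining identity gives $f(mm')=f(m)f(m')^{\phi(m)}=f(m)f(m')$, so $f|_M$ is genuinely a homomorphism and $\alpha$ is well defined. More importantly, for the \emph{fixed} action $\phi$ the set of points on which two elements $f,f'\in Z^1_\phi(H,G)$ agree is a subgroup of $H$: it is closed under multiplication because $f(xy)=f(x)f(y)^{\phi(x)}$ is a function of $f(x)$ and $f(y)$ alone, and closed under inversion because $f(x^{-1})$ is determined by $f(x)$ through $1=f(x)f(x^{-1})^{\phi(x)}$. Hence any $f\in Z^1_\phi(H,G)$ is determined by its values on any generating set of $H$.

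Next I would bound an individual fiber of $\alpha$. Write $Q=H/M\cong\operatorname{im}\phi\le G$, a finite group of order $|H/\ker\phi|$. Choosing elements $\bar q_1,\bar q_2,\dots$ of $Q$ greedily, each lying outside the subgroup generated by the previous ones, the order of the subgroup generated at least doubles at each step, so the process terminates with a generating set $\bar q_1,\dots,\bar q_r$ of $Q$ with $r\le\log_2|Q|$. Lift these to $s_1,\dots,s_r\in H$; then $\langle M,s_1,\dots,s_r\rangle$ contains $M=\ker\phi$ and surjects onto $Q$, hence is all of $H$. Given $f_0$ in the image of $\alpha$, any $f$ in the fiber over $f_0$ is therefore determined by $f_0$ together with the tuple $(f(s_1),\dots,f(s_r))\in G^r$, so the fiber injects into $G^r$ and has size at most $|G|^r$.

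Finally I would finish with the elementary estimate $|G|^r\le|G|^{\log_2|Q|}=|Q|^{\log_2|G|}\le|Q|^{|G|}=|H/\ker\phi|^{|G|}$, using $\log_2 t\le t$ for $t\ge 1$; when $Q$ is trivial, $r=0$ and the fiber is a single point, consistent with the bound $1^{|G|}$. The only point that requires genuine (if brief) attention in this twisted setting is checking that the agreement locus of two crossed homomorphisms is a subgroup; and the one real idea, as opposed to bookkeeping, is to generate $Q$ by $\log_2|Q|$ elements rather than by all of $Q$ — using all of $Q$ would only give the weaker bound $|G|^{|Q|}$, whereas the logarithmic generation bound is exactly what converts it into $|H/\ker\phi|^{|G|}$.
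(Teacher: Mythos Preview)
Your proof is correct and takes a different route from the paper's. The paper fixes two crossed homomorphisms $f,g$ in the same fiber, computes that the pointwise quotient $f*g^{-1}:x\mapsto f(x)g(x)^{-1}$ satisfies a twisted cocycle identity, and checks directly that it is constant on left $\ker\phi$-cosets; hence $f\mapsto f*g^{-1}$ injects the fiber into $\textnormal{Maps}(H/\ker\phi,G)$. You instead observe that the set on which two crossed homomorphisms (for the same action $\phi$) agree is a subgroup of $H$, so a crossed homomorphism is determined by its values on any generating set; you then generate $H$ by $\ker\phi$ together with at most $\log_2|H/\ker\phi|$ lifts of a greedy generating set of the quotient, and bound the fiber by $|G|^{\log_2|H/\ker\phi|}=|H/\ker\phi|^{\log_2|G|}\le |H/\ker\phi|^{|G|}$.

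What each approach buys: the paper's coset argument is shorter and makes the structure of the fiber transparent, but as written it actually yields $|\textnormal{Maps}(H/\ker\phi,G)|=|G|^{|H/\ker\phi|}$ rather than the stated $|H/\ker\phi|^{|G|}$---a harmless slip for the application, since either quantity is a constant depending only on the group data. Your argument, via the logarithmic generation bound, proves the lemma exactly as stated and in fact gives the sharper exponent $\log_2|G|$ in place of $|G|$; the ``agreement locus is a subgroup'' observation is also a clean reusable principle for crossed homomorphisms with a fixed action.
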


\begin{proof}
Given any $f\in Z^1_{\phi}(H,G)$, the restriction $f|_{\ker\phi}$ belongs to $Z^1_{\phi}(\ker \phi,G)$. Note that $\phi|_{\ker \phi}:\ker \phi\rightarrow \Aut(G)$ is the trivial map, so $Z^1_{\phi}(\ker\phi, G)= \Hom(\ker \phi,G)$.

Suppose $f,g\in Z^1_{\phi}(H,G)$ such that $f|_{\ker\phi}=g|_{\ker\phi}$. Then $f*g^{-1}$ is a map sending $\ker \phi$ to $1$ such that
\begin{align*}
(f*g^{-1})(xy) &= f(xy)g(xy)^{-1}\\
&= f(x)[\phi(x).f(y)][\phi(x).g(y)]^{-1}g(x)^{-1}\\
&= f(x)g(x)^{-1} g(x)[\phi(x).(f(y)g(y)^{-1})]g(x)^{-1}\\
&= (f*g^{-1})(x) [(\kappa g)(x)\phi(x).(f*g^{-1})(y)]\,.
\end{align*}
For any $y\in H$ and $z\in \ker \phi$, $(f*g^{-1})(z)=1$ implies
\begin{align*}
(f*g^{-1})(yz) &= (f*g^{-1})(y) [(\kappa g)(y)\phi(y).(f*g^{-1})(z)]\\
&= (f*g^{-1})(y)\,.
\end{align*}
Therefore $(f*g^{-1})$ factors through the group of left cosets $H/\ker\phi$, so that the fiber $\alpha^{-1}(f|_{\ker\phi})$ embeds in $\textnormal{Maps}(H/\ker\phi,G)$, which has size $|H/\ker\phi|^{|G|}$.
\end{proof}

We will combine this with the result of Lemma \ref{lem:homfiber} in order to produce asymptotic upper bounds for $|\Hom(H,G)|$ when $H=G_K$.

\begin{proof}[Proof of Theorem \ref{thm:main}]
By Lemma \ref{lem:Dirichlet} it suffices to consider the Dirichlet series
\[
L_{\inv}(K,\Sigma;s)=\sum_{\pi\in \Surj^{\Sigma}(G_K,G)} {\rm Nm}_{K/\Q}(\inv(\pi))^{-s}\,.
\]
We decompose this series according to the fibers of $(q_N)_*$ as
\[
L_{\inv}(K,\Sigma;s)=\sum_{\overline{\pi}\in (q_N)_*(\Surj^{\Sigma}(G_K,G))} \sum_{\pi\in(q_N)_*^{-1}(\overline{\pi})}{\rm Nm}_{K/\Q}(\inv(\pi))^{-s}\,.
\]
Let $S(\pi)$ be the set of places ramified in $\pi$ which are not ramified in $\overline{\pi}$, and $S(\overline{\pi})$ the places ramified in $\overline{\pi}$. Lemma \ref{lem:homfiber} implies that
\[
\#\{\pi' \in (q_N)_*(\overline{\pi}) : S(\pi') \subset S(\pi)\} \le \Big|Z^1_{\kappa\pi}\left(G_K^{S(\pi)\cup S(\overline{\pi})},N\right)\Big|\,.
\]
The definitions of $\overline{\inv}$ and $a_{\inv}(\Sigma)$ imply that for any $a\le a_{\inv}(\Sigma)$
\begin{align*}
{\rm Nm}_{K/\Q}(\inv(\pi)) &= \prod_{\pK} {\rm Nm}_{K/\Q}(\pK)^{\inv_\pK(\pi)}\\
&= \prod_{\pK : \overline{\pi}(I_\pK)= 1}{\rm Nm}_{K/\Q}(\pK)^{\inv_\pK(\pi)}\cdot\prod_{\pK : \overline{\pi}(I_\pK)\ne 1}{\rm Nm}_{K/\Q}(\pK)^{\inv_\pK(\pi)}\\
&\gg\prod_{\pK\in S(\pi)}{\rm Nm}_{K/\Q}(\pK)^{a} \cdot \prod_{\pK : \overline{\pi}(I_\pK)\ne 1}{\rm Nm}_{K/\Q}(\pK)^{\overline{\inv}_\pK(\overline{\pi})}\\
&\gg\prod_{\pK\in S(\pi)}{\rm Nm}_{K/\Q}(\pK)^{a} \cdot {\rm Nm}_{K/\Q}(\overline{\inv}(\overline{\pi}))\,.
\end{align*}
(Note that it is necessary to look only at $a\le a_{\inv}(\Sigma)$ in order to include the case $a_{\inv}(\Sigma)=\infty$.) Putting these together gives the following upper bound for any $a\le a_{\inv}(\Sigma(N))$:
\begin{align*}
L_{\inv}(K,\Sigma;s) \ll&\sum_{\overline{\pi}\in (q_N)_*(\Surj^{\Sigma}(G_K,G))} \sum_{\substack{S\subset P_K\\\text{finite}\\S(\overline{\pi})\cap S=\emptyset}}\Big|Z^1_{\kappa\pi}\left(G_K^{S\cup S(\overline{\pi})},N\right)\Big|\\
&\times {\rm Nm}_{K/\Q}\left(\prod_{\pK\in S} \pK\right)^{-sa}{\rm Nm}_{K/\Q}(\overline{\inv}(\overline{\pi}))^{-s}\,,
\end{align*}
where $\pi$ is any choice of preimage of $\overline{\pi}$ that is ramified inside of $S$ (if such a choice does not exist, we treat that term of the sum as zero). However, the fact that $N$ is an abelian normal subgroup implies that it acts trivially on itself by conjugation. In particular, the action $\kappa\pi$ is determined uniquely by $\overline{\pi}$, and so is independent of the choice of lift. By taking advantage of this fact, we see that the summands no longer depend on the lift $\pi$.
\begin{align*}
L_{\inv}(K,\Sigma;s) \ll&\sum_{\overline{\pi}\in (q_N)_*(\Surj^{\Sigma}(G_K,G))} \sum_{\substack{S\subset P_K\\\text{finite}\\S(\overline{\pi})\cap S=\emptyset}}\Big|Z^1_{\kappa\overline{\pi}}\left(G_K^{S\cup S(\overline{\pi})},N\right)\Big|\\
&\times {\rm Nm}_{K/\Q}\left(\prod_{\pK\in S} \pK\right)^{-sa}{\rm Nm}_{K/\Q}(\overline{\inv}(\overline{\pi}))^{-s}\,.
\end{align*}
Lemma \ref{lem:ztohom} then implies that
\begin{align*}
\Big|Z^1_{\kappa\overline{\pi}}\left(G_K^{S\cup S(\overline{\pi})},N\right)\Big| \le |G_K^{S\cup S(\overline{\pi})}/\ker(\kappa\overline\pi)|^{|G|} |\Hom(\ker(\kappa\overline{\pi}),N)|\,.
\end{align*}
For every choice of $\overline{\pi}$, $|G_K^{S\cup S(\overline{\pi})}/\ker(\kappa\overline{\pi})|\le |\Aut(N)|$ is bounded independent of $S$. The Galois correspondence gives $\ker(\kappa\overline{\pi})=G_M^{S\cup S(\overline{\pi})}$ for $M=L^{\ker(\kappa\overline{\pi})}=L^{C_G(N)/N}$ where $L$ is the fixed field of $\ker \overline{\pi}$ and we let $G_M^{S\cup S(\overline{\pi})}$ denote the Galois group of the maximal extension unramified at all places $\mathcal{P}\in P_M$ such that $\mathcal{P}\mid \pK$ for some $\pK\in P_K-(S\cup S(\overline{\pi}))$. We note that $\overline{\pi}$ is surjective, so necessarily any field fixed by $\ker(\kappa\overline{\pi})$ is necessarily fixed by $\ker(\kappa)=C_G(N)$.

Lemma \ref{lem:abelian} implies that there exists a constant $C=C(N,m)$ independent of $S$ and $L$ such that
\begin{align*}
\Big|Z^1_{\kappa\overline{\pi}}\left(G_K^{S\cup S(\overline{\pi})},N\right)\Big| \ll& \Big|\Hom\left(\Cl\left(L^{C_{G}(N)/N}\right),N\right)\Big| C^{|S|+|S(\overline{\pi})|}\\
=& \Big|\Hom\left(\Cl\left(L^{C_{G}(N)/N}\right),N\right)\Big| C^{|S|+\omega(\overline{\inv}(\overline{\pi}))}\,,
\end{align*}
where $\omega(\mathfrak{a})=$ the number of distinct prime divisors of $\mathfrak{a}$. This implies
\begin{align*}
L_{\inv}(K,\Sigma;s) \ll&\sum_{\overline{\pi}\in (q_N)_*(\Surj^{\Sigma}(G_K,G))} \sum_{\substack{S\subset P_K\\\text{finite}\\S\cap S(\overline{\pi})=\emptyset}}\Big|\Hom\left(\Cl\left(L^{C_{G}(N)/N}\right),N\right)\Big|\\
&\times C^{|S|+\omega(\overline{\inv}(\overline{\pi}))}{\rm Nm}_{K/\Q}\left(\prod_{\pK\in S} \pK\right)^{-sa}{\rm Nm}_{K/\Q}(\overline{\inv}(\overline{\pi}))^{-s}\\
\ll&\sum_{\overline{\pi}\in (q_N)_*(\Surj^{\Sigma}(G_K,G))} \sum_{\substack{S\subset P_K\\\text{finite}}}\Big|\Hom\left(\Cl\left(L^{C_{G}(N)/N}\right),N\right)\Big|\\
&\times C^{|S|+\omega(\overline{\inv}(\overline{\pi}))}{\rm Nm}_{K/\Q}\left(\prod_{\pK\in S} \pK\right)^{-sa}{\rm Nm}_{K/\Q}(\overline{\inv}(\overline{\pi}))^{-s}\,.
\end{align*}
By removing the dependence of $S$ on $\overline{\pi}$ in the last step, we can factor the upper bound into a product of the following two Dirichlet series:
\begin{align*}
\sum_{\substack{S\subset P_K\\\text{finite}}}C^{|S|}{\rm Nm}_{K/\Q}\left(\prod_{\pK\in S} \pK\right)^{-sa}
\end{align*}
and
\begin{align*}
\sum_{\overline{\pi}\in (q_N)_*(\Surj^{\Sigma}(G_K,G))}\Big|\Hom\left(\Cl\left(L^{C_{G}(N)/N}\right),N\right)\Big|C^{\omega(\overline{\inv}(\overline{\pi}))}{\rm Nm}_{K/\Q}(\overline{\inv}(\overline{\pi}))^{-s}\,.
\end{align*}
The first Dirichlet series can be bounded as follows: noting that $C^{\omega(\mathfrak{a})}\ll {\rm Nm}_{K/\Q}(\mathfrak{a})^{\epsilon}$ for each $\epsilon>0$:
\begin{align*}
\sum_{\substack{S\subset P_K\\\text{finite}}}C^{|S|}{\rm Nm}_{K/\Q}\left(\prod_{\pK\in S} \pK\right)^{-sa}&\le\sum_{\mathfrak{a}\in I_K} C^{\omega(\mathfrak{a})}{\rm Nm}_{K/\Q}(\mathfrak{a})^{-sa}\\
&\ll \zeta_K(sa-\epsilon)\,.
\end{align*}
Recalling that $a$ can be chosen to be any number $\le a_{\inv}(\Sigma(N))$ and that the Dedekind zeta function $\zeta_K$ converges for all real numbers larger than $1$, it follows that the first series converges for all $s>1/a_{\inv}(\Sigma(N))$.

For the second Dirichlet series we directly apply Lemma \ref{lem:Dirichlet} to say that the series converges absolutely for all $s>b$ if and only if
\[
\sum_{\substack{\overline{\pi}\in (q_N)_*(\Surj^{\Sigma}(G_K,G))\\{\rm Nm}_{K/\Q}(\overline{\inv}(\overline{\pi}))<X}}\Big|\Hom\left(\Cl\left(L^{C_{G}(N)/N}\right),N\right)\Big|C^{\omega(\overline{\inv}(\overline{\pi}))} \ll X^{b+\epsilon}\,.
\]
Thus the series $L_{\inv}(K,\Sigma;s)$ converges absolutely for $s>\max\{1/a_{\inv}(\Sigma(N)), b\}$ for any positive real number $b$ satisfying the above inequality. Lemma \ref{lem:Dirichlet} then implies
\[
N_w(K,\Sigma;X) \ll \max\{X^{1/a_w(\Sigma(N))+\epsilon}, X^{b+\epsilon}\}\,.
\]
Taking a $\liminf$ of all the choices for $b$ concludes the proof of Theorem \ref{thm:main}
\end{proof}

\section{Proofs of the Corollaries}\label{sec:cor}

We first state and prove the analog of Corollary \ref{cor:solvableintro}. We will additionally be explicit about which averages of torsion in class groups need to be considered in order to prove Malle's predicted upper bounds.

\begin{corollary}\label{cor:solvable}
Let $K$ be a number field, $G$ a finite solvable group, $\inv$ an admissible ordering, and $\Sigma$ a family of local conditions. Suppose there exists a normal series
\[
1 \normal G_1 \normal G_2 \normal \cdots \normal G_{n} \normal G
\]
such that for each $i=1,2,...,n$ the series satisfies
\begin{enumerate}
\item[(a)]{
$G_i\normal G$,}

\item[(b)]{
the factor $G_i/G_{i-1}$ is abelian,
}

\item[(c)]{
the average size of $|\Hom(\Cl(L^{C_G(N)/N}),N)|$ for $N=G_i/G_{i-1}$ as $L/K$ varies over all $(G/G_i)$-extensions ordered by $\overline{\inv}$ satisfying the local conditions $(q_{G_i})_*\Sigma$ is $\ll X^{\epsilon}$, i.e.
\[
\ds \sum_{\substack{\pi\in \Surj^{(q_{G_i}^*)\Sigma}(G_K,G/G_i)\\ {\rm Nm}_{K/\Q}(\overline{\inv}(\pi))<X}} \Big|\Hom\left(\Cl\left(L^{C_{G}(G_i/G_{i-1})/G_i}\right),G_i/G_{i-1}\right)\Big|
\]
is $\ll X^{\epsilon}N_{\overline{\inv}}(K,(q_{G_i}^*)\Sigma;X)$, where $C_{G}(G_i/G_{i-1})$ is the centralizer of the action by conjugation of $G$ on $G_i/G_{i-1}$.
}
\end{enumerate}
Then
\[
N_{\inv}(K,\Sigma;X) \ll X^{1/a_{\inv}(\Sigma) + \epsilon}\,.
\]
\end{corollary}

This result gives strong evidence that Conjecture \ref{conj:genWM} holds for any solvable group under any admissible ordering with any restricted local conditions, as a normal series satisfying conditions (a) and (b) exists for any solvable group and (c) would follow from the average torsion conjecture for class groups. We immediately see that Corollaries \ref{cor:solvableintro} and \ref{cor:solvableell} follow from Corollary \ref{cor:solvable}.

\begin{proof}
We induct on the length of this normal series. Suppose $n=0$ as the base case, i.e. the series is just $1\normal G$ so that $G$ must be abelian. By Lemma \ref{lem:Dirichlet} it suffices to consider the Dirichlet series
\[
\sum_{\pi\in \Surj^{\Sigma}(G_K,G)} {\rm Nm}_{K/\Q}(\inv(\pi))^{-s}\,.
\]
By definition, for all but finitely many $\pK$ the exponent $\inv_\pK(\pi)\ge a_{\inv}(\Sigma)$. There are only finitely many choices for local behavior at the places violating this bound, which implies that for any real number $a\le a_{\inv}(\Sigma)$
\[
\sum_{\pi\in \Surj^{\Sigma}(G_K,G)} {\rm Nm}_{K/\Q}(\inv(\pi))^{-s}\ll \sum_{\pi\in \Surj^{\Sigma}(G_K,G)} {\rm Nm}_{K/\Q}\left(\prod_{\pK\mid \inv(\pi)}\pK\right)^{-sa}\,.
\]
(Note that it is necessary to look only at $a\le a_{\inv}(\Sigma)$ in order to include the case $a_{\inv}(\Sigma)=\infty$.)

Admissibility implies $\pK\mid \inv(\pi)$ if and only if $\pi(I_\pK)=1$ for all but finitely many places. Thus we get an upper bound
\[
\sum_{\pi\in \Surj^{\Sigma}(G_K,G)} {\rm Nm}_{K/\Q}\left(\prod_{\pK\mid \inv(\pi)}\pK\right)^{-sa} \le \sum_{\substack{S\subset P_K\\\text{finite}}} |\Hom(G_K^S,G)| {\rm Nm}_{K/\Q}\left(\prod_{\pK\in S} \pK\right)^{-sa}\,.
\]
We then apply Lemma \ref{lem:abelian} to get the bounds
\begin{align*}
&\sum_{\substack{S\subset P_K\\\text{finite}}} |\Hom(G_K^S,G)| \mathcal{N}_{K/\Q}\left(\prod_{\pK\in S} \pK\right)^{-sa}\\
&\hspace{1cm}\le \sum_{\substack{S\subset P_K\\\text{finite}}} |\Hom(\Cl(K),G)| C^{|S|}{\rm Nm}_{K/\Q}\left(\prod_{\pK\in S} \pK\right)^{-sa}\\
&\hspace{1cm}\le \sum_{\mathfrak{a}\in I_K} |\Hom(\Cl(K),G)| C^{\omega(\mathfrak{a})}{\rm Nm}_{K/\Q}(\mathfrak{a})^{-sa}\,,
\end{align*}
where $\omega(\mathfrak{a})$ is the number of distinct prime divisors of $\mathfrak{a}$. Noting that $C^{\omega(\mathfrak{a})}\ll {\rm Nm}_{K/\Q}(\mathfrak{a})^{\epsilon}$ for each $\epsilon>0$, we get an upper bound given by the Dedekind zeta function
\[
|\Hom(\Cl(K),G)| \zeta_K\left(sa-\epsilon\right)\,,
\]
which is known to converge absolutely for $sa-\epsilon>1$. As $a$ can be chosen to be any number $\le a_{\inv}(\Sigma)$, it follows that the series converges for all $s>1/a_{\inv}(\Sigma)$. Lemma \ref{lem:Dirichlet} then converts this result to the desired bound
\[
N_{\inv}(K,\Sigma;X) = \sum_{\substack{\pi\in \Surj^{\Sigma}(G_K,G)\\ {\rm Nm}_{K/\Q}(\inv(\pi))\le X}} 1= O(X^{1/a_{\inv}(\Sigma) + \epsilon})\,.
\]

Now suppose the theorem is true for groups that have a normal series of length $n-1$. We then apply Theorem \ref{thm:main} for $N=G_1$ the abelian normal subgroup of $G$ to show $N_{\inv}(K,\Sigma;X)$ is
\begin{align*}
\ll \max\left\{X^{1/a_{\inv}(\Sigma(N))+\epsilon}\ , \sum_{\substack{\pi\in \Surj^{(q_n)_*\Sigma}(G_K,G/N)\\ {\rm Nm}_{K/\Q}(\overline{\inv}(\pi))<X}} X^{\epsilon}\Big|\Hom\left(\Cl\left(L^{C_{G}(N)/N}\right),N\right)\Big|\right\}\,.
\end{align*}
We assumed a bound of the average size of class group torsion in this case, namely that for any $\epsilon>0$
\[
\sum_{\substack{\pi\in \Surj^{(q_n)_*\Sigma}(G_K,G/N)\\ {\rm Nm}_{K/\Q}(\overline{\inv}(\pi))<X}} \Big|\Hom\left(\Cl\left(L^{C_{G}(N)/N}\right),N\right)\Big| \ll X^{\epsilon} N_{\overline{\inv}}(K,(q_N)_*\Sigma;X)\,.
\]
It follows from the inductive hypothesis that
\[
N_{\inv}(K,\Sigma;G) \ll \max\left\{X^{1/a_{\inv}(\Sigma(N))+\epsilon}\ , X^{1/a_{\overline{\inv}}((q_N)_*\Sigma)+\epsilon}\right\}\,.
\]
We now note that
\begin{align*}
a_{\inv}(\Sigma) &= \liminf_{{\rm Nm}_{K/\Q}(\pK)\rightarrow \infty} \min_{\substack{\gamma\in \Sigma_\pK\\ \gamma(I_\pK)\ne 1}} \inv_\pK(\gamma)\\
&=\liminf_{{\rm Nm}_{K/\Q}(\pK)\rightarrow \infty} \min\left\{\min_{\substack{\gamma\in \Sigma_\pK(N)\\ \gamma(I_\pK)\ne 1}} \inv_\pK(\gamma),\min_{\substack{\gamma\in \Sigma_\pK-\Sigma_{\pK}(N)\\ \gamma(I_\pK)\ne 1}} \inv_\pK(\gamma) \right\}\\
&=\liminf_{{\rm Nm}_{K/\Q}(\pK)\rightarrow \infty} \min\left\{\min_{\substack{\gamma\in \Sigma_\pK(N)\\ \gamma(I_\pK)\ne 1}} \inv_\pK(\gamma),\min_{\substack{\gamma\in (q_N)_*\Sigma_\pK\\ \gamma(I_\pK)\ne 1}} \overline{\inv}_\pK(\gamma) \right\}\\
&=\min\{a_{\inv}(\Sigma), a_{\overline{\inv}}((q_N)_*\Sigma)\}\,,
\end{align*}
which concludes the proof.
\end{proof}

We next state the analog of the unconditional result Corollary \ref{cor:hypercenter}:

\begin{corollary}
Let $K$ be a number field, $G$ a finite group, $\inv$ an admissible ordering, and $\Sigma$ a family of local conditions. Suppose there exists a normal subgroup $N\normal G$ such that
\begin{enumerate}
\item[(a)]{
$N/(N\cap Z^{\infty}(G))$ is abelian with $d$ generators, where $Z^{\infty}(G)$ denotes the hypercenter of $G$ i.e. the last term of the lower central series,
}

\item[(b)]{
There exists a positive real number $M$ such that for each $G/N$-extension $L/K$ ${\rm Nm}_{K/\Q}(\disc(L/K))\ll {\rm Nm}_{K/\Q}(\overline{\inv}(L/K))^M$ and
\[
X^{\frac{1}{2} d M} \cdot N_{\overline{\inv}}(K,(q_N)_*\Sigma;X) \ll X^{1/a_{\inv}(\Sigma)+\epsilon}\,.
\]
}
\end{enumerate}
Then
\[
N_{\inv}(K,\Sigma;X) \ll X^{1/a_{\inv}(\Sigma) + \epsilon}\,.
\]
\end{corollary}

We immediately see how this is a generalization of Corollary \ref{cor:hypercenter}, and we discussed in the introduction how taking $N=G$ for $G$ nilpotent implies Corollary \ref{cor:nilpotent} (as well as the appropriate analog for any admissible ordering and any family of local conditions).

\begin{proof}
We choose a composition series
\[
1\normal N_1 \normal N_2 \normal N_3 \normal \cdots \normal N_n \normal N
\]
by specifying $N_1,...,N_n$ as the lower central series for $G$ intersected with $N$, so that in particular $N_n= (N\cap Z^{\infty}(G))$. This implies that for each $i=1,...,n$, $N_i$ is central in $G/N_{i-1}$ so that $C_G(N_i/N_{i-1})/N_i = G$. Thus for each $i=1,...,n$ it follows that
\begin{align*}
&\sum_{\substack{\pi\in \Surj^{(q_{N_i})_*\Sigma}(G_K,G/N_i)\\ {\rm Nm}_{K/\Q}(\overline{\inv}(\pi))<X}} X^\epsilon\Big|\Hom\left(\Cl\left(L_\pi^{C_{G}(N_i/N_{i-1})/N_i}\right),N_i/N_{i-1}\right)\Big|\\
&\hspace{1cm}\ll |\Hom(\Cl(K),N_i/N_{i-1})| X^\epsilon N_{\overline{\inv}}(K,(q_{N_i})_*\Sigma;X)\,.
\end{align*}
We also note that Minkowski's bound for the class group implies
\begin{align*}
&\sum_{\substack{\pi\in \Surj^{(q_{N})_*\Sigma}(G_K,G/N)\\ {\rm Nm}_{K/\Q}(\overline{\inv}(\pi))<X}} X^\epsilon\Big|\Hom\left(\Cl\left(L_\pi^{C_{G}(N/N_{n})/N}\right),N/N_{n}\right)\Big|\\
&\hspace{1cm}\ll\sum_{\substack{\pi\in \Surj^{(q_{N})_*\Sigma}(G_K,G/N)\\ {\rm Nm}_{K/\Q}(\overline{\inv}(\pi))<X}} X^\epsilon{\rm Nm}_{K/\Q}(\disc(L/K))^{\frac{1}{2}d}\\
&\hspace{1cm}\ll\sum_{\substack{\pi\in \Surj^{(q_{N})_*\Sigma}(G_K,G/N)\\ {\rm Nm}_{K/\Q}(\overline{\inv}(\pi))<X}} X^\epsilon{\rm Nm}_{K/\Q}(\overline{\inv}(L/K))^{\frac{1}{2}dM}\\
&\hspace{1cm}\ll X^{\frac{1}{2}dM} N_{\overline{\inv}}(K,(q_N)_*\Sigma;X)\\
&\hspace{1cm}\ll X^{1/a_{\inv}(\Sigma)+\epsilon}\,.
\end{align*}
Iterating along the composition series with Theorem \ref{thm:main} then implies $N_{\inv}(K,\Sigma;X)$ is
\begin{align*}
&\ll \max\left\{X^{1/a_{\inv}(\Sigma(N_1))+\epsilon}, ..., X^{1/a_{\overline{\inv}}((q_{N_{n}})_*\Sigma(N_n))+\epsilon},X^{1/a_{\inv}(\Sigma)+\epsilon}\right\}\\
&\ll X^{1/a_{\inv}(\Sigma)+\epsilon}\,.
\end{align*}
\end{proof}

\section{Remarks on Possible Generalizations}\label{sec:remarks}

Theorem \ref{thm:main} is not limited to solvable groups and can be used to address nonsolvable groups as well. The importance of solvable comes from class field theory - we know a lot more about abelian extensions than we do for $G$-extensions for $G$ a nonabelian simple group. Theorem \ref{thm:main} only allows for taking an abelian normal subgroup $N\normal G$ because we understand the field theory for $N$-extension very well. If we chose another normal subgroup $N\normal G$ the proof would break down when trying to analyze $|\Hom(G_K^S,N)|$. We don't have a nonabelian version of class field theory to give the upper bounds analogous to Lemma \ref{lem:abelian} necessary to prove the theorem.

As in the case for solvable groups, if one could show that there exists a positive constants $C=C(G,m)$ and $c=c(K,N)$ independent of $S$ such that $|\Hom(G_K^{S},N)|\ll  c(K,N)\cdot C(N,m)^{|S|}$ for any nonabelian simple group $N$ and we assume the number of unramified $N$-extensions of $L/K$ is smaller than $X^{\epsilon}$ on average, we could prove Malle's predicted upper bounds. Although these assertions match the corresponding result for abelian groups and the average torsion conjecture for class groups, there is significantly less evidence for bounds of these forms in the literature. Even in the case of solvable groups, one could make improvements by generalizing the bounds in Lemma \ref{lem:abelian} to other nonabelain solvable groups. This would in principle allow one to take the normal subgroup $N$ in Theorem \ref{thm:main} to be some nonabelian group, which could improve upper bounds by decreasing the size of $G/N$. This approach is reminiscent of the work of Wang \cite{jwang2017} on Malle's conjecture for $S_n\times A$, in which she proved upper bounds for $S_n$-extensions for $n=3,4,5$ which are uniform in the base field.

We were able to prove very broad results due to the fact that we restricted to considering only the upper bound as a power of $X$. These results can be traced through to keep track of the power of $\log X$, although more care is needed to get the sharpest results possible. In the cases of Corollary \ref{cor:hypercenter} for which Malle's predicted upper bound as a power of $X$ is known, it may be the case that with more care the techniques of \cite{kluners2012,jwang2017}, and an upcoming preprint \cite{lemke-oliver-jwang-wood2019} could be used to prove an upper bound with the correct power of $\log X$ as well. Optimistically, one would hope to generalize these approaches in order to prove the strong form of Malle's conjecture in as wide a range of cases as Corollary \ref{cor:hypercenter} if we are more restrictive on the ordering and families of local conditions. See \cite{alberts2019} for the first steps in this direction.

\appendix

\section{Appendix: Data}\label{app:data}

In this section, we will provide data for the upper bounds of $N(K,G;X)$ when $G\subset S_n$ is a transitive, solvable subgroup and $n$ small. We will directly compare the bounds given by iteratively applying Theorem \ref{thm:main} with Minkowski's bounds on the size of the class group to the best previously known bounds, in particular the general bounds for every group given by Dummit \cite{dummit2018} and Schmidt \cite{schmidt1995}.

Some families of groups are easy to produce bounds for $N(K,G;X)$ using computations done by hand, such as $D_n\subset S_n$ as discussed in the introduction. In general though, we can get a more complete picture by using a computer algebra program. All of the computations in this section are done using MAGMA \cite{MAGMA}.

One of the drawbacks of Dummit's result is the computational power necessary to compute sets of primary invariants, Dummit's data extends to transitive groups of degree 8 and then covers only four transitive groups of degree 9 because of the length of time computations were taking. If we apply the trivial bound from Minkowski to Theorem \ref{thm:main} the bulk of the computations are done by computing a normal series for $G$ and looping over elements of $G$ to compute $a(G)$ and the new upper bounds, which MAGMA is able perform very quickly by comparison.

We briefly describe the code being used in this section: For each transitive group $G$ in degree $d$, we iterate through the lattice of normal subgroups of $G$ to produce a list of all chief series for the group (i.e., normal series of maximal length). For each chief series
\[
1 \normal G_1 \normal G_2 \normal \cdots \normal G_n \normal G\,,
\]
we induct along the series by applying Theorem \ref{thm:main} at the bottom of the series as follows: If $G_1$ is central, apply Theorem \ref{thm:main} with $N=G_1$. If $G_1$ is not central, apply Theorem \ref{thm:main} with $N=G_i$ for $i$ the maximum index for which $G_i$ is abelian. In the first case, $N$ is central so the class group contributes nothing to the upper bound. In the second case, we use Minkowski's bound in order to give an upper bound for the contribution of the class group. We have left over a chief series for $G/N$ which is strictly shorter, and we have MAGMA repeat this process until we reach the end of the chief series. This is done for \emph{each} chief series, as different series give different bounds, and the program returns the minimum bound produced by one of the chief series.

We will use nTd to denote the group TranstiveGroup(n,d) in MAGMA's database, and we will only include solvable groups. In each column, we will give the corresponding power of $X$ upper bound of $N(K,G;X)$: the ``Malle" column which shows the upper bound predicted by Malle, the ``new" column which shows the unconditional bounds we prove using Theorem \ref{thm:main} with Minkowski's bounds, the ``previous" column which shows the previously best known result (or ``SF" if the strong form of Malle's conjecture is proven), and the ``reference" column which gives the reference for the previously best known result. We remark that Dummit's bounds depend slightly on the field $K$, if $X^{a}$ is the bound given over $\Q$ then $X^{a + 1 - 1/[K:\Q]}$ is the corresponding bound over $K$. Whenever Dummit's bounds are the best known, we specifically include the bound over $\Q$.

To preserve space, we include only those groups for which the new upper bounds produced are better than the previously known bounds. In two case, namely 8T33 and 8T34, the new bounds improve on Dummit's upper bounds whenever $K\ne \Q$, so we include these with an *. We remark that for groups where the new upper bounds are not better than the previously known bounds, it is possible for the new bounds to be extremely poor. The worst example of small degree is
\[
N(K,\text{8T43};X) \ll X^{77+\epsilon}\,,
\]
which is significantly worse than Dummit's bound of $X^{7/3}$. This bound is so large because 8T43 has a chief series of length 7, of which most of the factors are not central. This means that the class group increases the size of the bound at more steps of the induction.

The bounds we produce also tie with the best previously known bounds in several cases, including all nilpotent groups in the regular representation \cite{kluners-malle2004}, all $\ell$-groups \cite{kluners-malle2004}, and dihedral groups $D_p$ in both representations, all of which will be omitted from the table. Mehta \cite{mehta2019} announced some results around the same time as this paper: he proves Malle's predicted upper bound for 6T4 and 6T10, and although our results are better than the previously known upper bounds before Mehta's result we do not produce Malle's predicted upper bound. Thus we exclude 6T4 and 6T10 from the tables. Mehta also proves unconditional upper bounds for $C_m\rtimes C_t\subset S_n$ for $n=m,mt$ which agree with the bounds we produce. This covers the groups 7T3, 7T4, 9T3, and 9T10 in our table, and we choose to include these bounds with a $\dagger$ to indicate the concurrent result.


{
\begin{center}
\begin{longtable}{|c|c|c|c|c|c|}
\hline
degree 6 & Isom. to & Malle & new & previous & reference\\
\hline
6T3 & $S_3\times C_2$ & $X^{1/2+\epsilon}$  & $X^{3/4+\epsilon}$ & $X^{7/3}$ & Dummit \cite{dummit2018}\\
6T5 & $C_3\wr C_2$ & $X^{1/2+\epsilon}$ & $X^{1/2+\epsilon}$ & $X^{7/4}$ & Dummit \cite{dummit2018}\\
6T8 & $S_4$ & $X^{1/2+\epsilon}$ & $X^{3/2+\epsilon}$  & $X^2$ & Schmidt \cite{schmidt1995}\\
6T9 & $S_3\times S_3$ & $X^{1/2+\epsilon}$ & $X^{3/2+\epsilon}$ & $X^2$ & Schmidt \cite{schmidt1995}\\
\hline
%
%
\hline
degree 7 & Isom. to & Malle & new & previous & reference\\
\hline
7T3$^\dagger$ & $F_{21}$ & $X^{1/4+\epsilon}$ & $X^{1/2+\epsilon}$ & $X^{7/4}$ & Dummit \cite{dummit2018}\\
7T4$^\dagger$ & $F_{42}$ & $X^{1/3+\epsilon}$ & $X^{5/6+\epsilon}$ & $X^2$ & Dummit \cite{dummit2018}\\
\hline
%
%
\hline
degree 8 & Isom. to & Malle & new & previous & reference\\
\hline
8T12 & $SL_2(\F_3)$ & $X^{1/4+\epsilon}$ & $X^{3/4+\epsilon}$ & $X^{5/2}$ & Schmidt \cite{schmidt1995}\\
8T13 & $A_4\times C_2$ & $X^{1/4+\epsilon}$ & $X^{3/4+\epsilon}$ & $X^{5/2}$ & Schmidt \cite{schmidt1995}\\
8T14 & $S_4$ & $X^{1/4+\epsilon}$ & $X^{11/8+\epsilon}$ & $X^{5/2}$ & Schmidt \cite{schmidt1995}\\
8T23 & $GL_2(\F_3)$ & $X^{1/3+\epsilon}$ & $X^{3/2+\epsilon}$ & $X^{5/2}$ & Schmidt \cite{schmidt1995}\\
8T24 & $S_4\times C_2$ & $X^{1/2+\epsilon}$ & $X^{9/4+\epsilon}$ & $X^{5/2}$ & Schmidt \cite{schmidt1995}\\
8T32 & & $X^{1/2+\epsilon}$ & $X^{5/4+\epsilon}$ & $X^{5/2}$ & Schmidt \cite{schmidt1995}\\
8T33$^*$ & $C_2^2\rtimes C_6$ & $X^{1/2+\epsilon}$ & $X^{9/4+\epsilon}$ & $X^2$ & Dummit \cite{dummit2018}\\
8T34$^*$ & $E_4^2 \rtimes D_6$ & $X^{1/2+\epsilon}$ & $X^{19/8+\epsilon}$ & $X^2$ & Dummit \cite{dummit2018}\\
\hline
%
%
%
\hline
degree 9 & Isom. to & Malle & new & previous & reference\\
\hline
9T3$^\dagger$ & $D_9$ & $X^{1/4+\epsilon}$ & $X^{3/8+\epsilon}$ & $X^{13/6}$ & Dummit \cite{dummit2018}\\
9T5 & $C_3^2 \rtimes C_2$ & $X^{1/4+\epsilon}$ & $X^{1/2+\epsilon}$ & $X^{19/12}$ & Dummit \cite{dummit2018}\\
9T8 & $S_3\times S_3$ & $X^{1/3+\epsilon}$ & $X^{1+\epsilon}$ & $X^2$ & Dummit \cite{dummit2018}\\
9T9 & & $X^{1/4+\epsilon}$ & $X^{3/4+\epsilon}$ & $X^{11/4}$ & Schmidt \cite{schmidt1995}\\
9T10$^\dagger$ & & $X^{1/4+\epsilon}$ & $X^{3/4+\epsilon}$ & $X^{11/4}$ & Schmidt \cite{schmidt1995}\\
9T11 & & $X^{1/4+\epsilon}$ & $X^{9/8+\epsilon}$ & $X^{11/4}$ & Schmidt \cite{schmidt1995}\\
9T12 & & $X^{1/3+\epsilon}$ & $X^{2/3+\epsilon}$ & $X^{11/4}$ & Schmidt \cite{schmidt1995}\\
9T13 & & $X^{1/3+\epsilon}$ & $X^{4/3+\epsilon}$ & $X^{11/4}$ & Schmidt \cite{schmidt1995}\\
9T14 & & $X^{1/4+\epsilon}$ & $X^{5/4+\epsilon}$ & $X^{11/4}$ & Schmidt \cite{schmidt1995}\\
9T15 & & $X^{1/4+\epsilon}$ & $X^{5/4+\epsilon}$ & $X^{11/4}$ & Schmidt \cite{schmidt1995}\\
9T16 & & $X^{1/3+\epsilon}$ & $X^{5/3+\epsilon}$ & $X^{11/4}$ & Schmidt \cite{schmidt1995}\\
9T18 & & $X^{1/3+\epsilon}$ & $X^{5/2+\epsilon}$ & $X^{11/4}$ & Schmidt \cite{schmidt1995}\\
9T20 & & $X^{1/2+\epsilon}$ & $X^{3/2+\epsilon}$ & $X^{11/4}$ & Schmidt \cite{schmidt1995}\\
9T21 & & $X^{1/2+\epsilon}$ & $X^{3/2+\epsilon}$ & $X^{11/4}$ & Schmidt \cite{schmidt1995}\\
9T22 & & $X^{1/2+\epsilon}$ & $X^{11/6+\epsilon}$ & $X^{11/4}$ & Schmidt \cite{schmidt1995}\\
9T24 & & $X^{1/2+\epsilon}$ & $X^{7/2+\epsilon}$ & $X^{11/4}$ & Schmidt \cite{schmidt1995}\\
\hline
\end{longtable}
\end{center}
}
\vspace{-0.75cm}
We remark that Dummit only computes the bounds for groups 9T3, 9T4, 9T5, and 9T8 in \cite{dummit2018}. Dummit's Theorem does give bounds for all proper transitive subgroups $G\subset S_n$ which are known to be better that Schmidt's bounds if $G$ is primitive, but it becomes computationally intensive to find a set of primary invariants in order to compute the bound (it took Dummit's code two days to produce the bounds in degrees 5, 6, 7, 8, and for just these four groups in degree 9).

One should notice that our new bounds appear to improve many more results in degree $9$ than in degree $8$. This is for several reasons. $|S_8|$ is divisible by a much much larger power of $2$ than the power of $3$ dividing $|S_9|$, which means there are a lot more $2$-groups in $S_8$ for which (conjecturally) sharp bounds were already proven by Kl\"uners-Malle. Because $8$ is divisible by $2$, $S_8$ also has some transitive subgroups of the form $C_2\wr H$ for which the strong form of Malle's conjecture holds \cite{kluners2012}, while $S_9$ has no such subgroups. Lastly, $8$ has more divisors than $9$, which means there are a lot more ways to make transitive subgroups with longer normal series and we know that that the unconditional bounds coming from Theorem \ref{thm:main} with Minkowski's bounds get worse for longer normal series.

We would expect this pattern to continue to hold for larger degrees. Our new results are likely to improve the best known upper bounds for more groups in odd degrees with fewer divisors.

\section*{Acknowledgements}

I would like to thank my advisor Nigel Boston, Jordan Ellenberg, J\"urgen Kl\"uners, and Melanie Matchett Wood for many helpful conversations and recommendations. I would also like to thank anonymous referees for providing additional feedback during the submission process. This work was done with the support of National Science Foundation grant DMS-1502553.

\bibliographystyle{alpha}

\bibliography{../../BAreferencesV2019}

Department of Mathematics, University of Connecticut, 341 Mansfield Rd, Storrs, CT 06269 USA\\
\emph{E-mail address: brandon.alberts@uconn.edu}
\end{document}